\newtheorem{theorem}{Theorem}[section]
\newtheorem{proposition}[theorem]{Proposition}
\theoremstyle{definition}
\date{D\'ecembre 2010}
\title{Feuilletage de Hirsch, mesures harmoniques et $g$-mesures}
\author{Bertrand Deroin et Constantin Vernicos}
\begin{document}
\maketitle

\begin{small}

\section{Introduction}

Un feuilletage lisse $\mathcal F$ est la donn\'ee d'une vari\'et\'e $M$ et d'un atlas $\{  \Phi_i \}_{i\in I}$ sur $M$ form\'e de diff\'eomor\-phismes $\Phi_i : U_i \rightarrow {\bf R} ^ p \times {\bf R} ^q$ tels que les changements de cartes $\Phi_j\circ \Phi_i^{-1}$ s'expriment par~:
\[  \Phi _j \circ \Phi_i ^{-1} (x_i,t_i) = (x_j(x_i,t_i), t_j(t_i) ) .\]
Les nombres $p$ et $q$ sont respectivement la dimension et la codimension du feuilletage. 

\'Etant donn\'ee une m\'etrique lisse $ds^2$ sur le fibr\'e tangent de $\mathcal F$, on note $\Delta$ le laplacien associ\'e \`a cette m\'etrique. Lucy Garnett a \'etudi\'e dans \cite{Garnett} l'\'equation de la chaleur feuillet\'ee $\frac{\partial u } {\partial t} = \Delta u$, et y d\'eveloppe la th\'eorie ergodique du mouvement brownien le long des feuilles, en d\'emontrant que les mesures stationnaires de ce processus de Markov sont les mesures de probabilit\'e $\mu$ dites \textit{harmoniques}, c'est \`a dire telles que $\Delta \mu = 0$ au sens faible. L'ensemble des mesures harmoniques forme un compact convexe de l'espace des mesures de probabilit\'e sur la vari\'et\'e ambiante. 

Lorsque le feuilletage est lisse et transversalement conforme, et qu'il ne poss\`ede pas de mesure transverse invariante, il est d\'emontr\'e dans~\cite{DK} qu'il n'existe qu'un nombre fini d'ensembles minimaux\footnote{On entend par \textit{minimal} un ensemble ferm\'e $\mathcal F$-satur\'e dans lequel toute les feuilles sont denses.} supportant chacun une unique mesure harmonique, et que de surcro\^{\i}t, toute mesure harmonique est une combinaison convexe de celles-ci. Ainsi, l'ensemble des mesures harmoniques sur un tel feuilletage est un simplexe de dimension $n-1$, o\`u $n$ est le nombre de minimaux de $\mathcal F$.  

Les hypoth\`eses de r\'egularit\'e de ce th\'eor\`eme sont les suivantes~: le feuilletage est de classe $C^1$ transversalement, les feuilles sont des sous-vari\'et\'es immerg\'ees de classe $C^{\infty}$, et la m\'etrique sur ces feuilles varie de fa\c{c}on h\"old\'erienne en fonction du param\`etre transverse. Dans cette note, nous d\'emontrons que si l'on affaiblit l'hypoth\`ese de r\'egularit\'e sur la m\'etrique, le r\'esultat n'est plus valable~: nous construisons un feuilletage lisse par surfaces d'une vari\'et\'e de dimension $3$, dont toutes les feuilles sont denses, et une famille de m\'etriques lisses sur ses feuilles qui d\'ependent contin\^ument du param\`etre transverse (mais pas de fa\c{c}on H\"older), et pour laquelle il existe au moins deux mesures harmoniques diff\'erentes. 

Nous avons donc un exemple de feuilletage o\`u la g\'eom\'etrie du convexe des mesures harmoniques varie en fonction de la m\'etrique choisie sur les feuilles. Des exemples de ce type ont \'et\'e trouv\'e par Victor Kleptsyn et Samuel Petite en toute r\'egularit\'e, mais en codimension sup\'erieure~\cite{Kleptsyn-Petite}. 

Signalons aussi que le feuilletage que nous construisons est minimal mais pas uniquement ergodique. De tels exemples ont \'et\'e construit dans~\cite{Deroin} en toute r\'egularit\'e, en utilisant une construction de type suspension et l'existence d'un diff\'eomorphisme minimal du tore non uniquement ergodique, d\^ue \`a Hillel Furstenberg.

Nous remercions le rapporteur pour ses remarques qui ont am\'elior\'e l'exposition.  

\section{Feuilletage de Hirsch} 

Hirsch a construit un feuilletage lisse de dimension $2$ et de codimension $1$ d'une vari\'et\'e compacte ferm\'ee, associ\'e \`a un diff\'eomorphisme local $T$ du cercle dans lui-m\^eme, de degr\'e topologique $d>1$. Nous rappelons cette construction dans le cas de la transformation $T(z) = z^2$ du cercle unit\'e dans lui-m\^eme. 

Consid\'erons un pantalon orient\'e $P$ et notons ses trois composantes de bord $\partial_i P$, $i=1,2,3$. Soit $\sigma : P\rightarrow P$ une involution lisse qui stabilise la composante $\partial _3 P$ de $\partial P$ et \'echange les composantes $\partial _1 P $ et $\partial _2 P$. Par exemple, on pourra prendre pour $P$ le disque unit\'e de ${\bf C}$ priv\'e des disques de rayon $1/4$ centr\'es en $1/2$ et $-1/2$, et poser $\sigma(x) = -x$. Les composantes $\partial_1 P$, $\partial _2 P$ sont alors respectivement les cercles de rayon $1/4$ de centre $1/2$ et $-1/2$, et $\partial _3 P$ est le cercle unit\'e.

Notons $i(z) = -z$ l'involution du cercle dans lui-m\^eme qui consiste \`a \'echanger les points des fibres de $T$. Le quotient $N = (P \times {\bf S}^1) / (\sigma \times i)$ poss\`ede une structure naturelle de fibration en pantalons $P \rightarrow N \rightarrow {\bf S}^1 / i$, et est diff\'eomorphe \`a un tore solide duquel on a enlev\'e un tore solide int\'erieur qui fait deux fois le tour du premier - voir le survol~\cite{Ghys} dans lequel le lecteur trouvera une jolie figure repr\'esentant ce quotient. Le bord de $N$ est form\'e de deux composantes toriques~: la composante int\'erieure $\partial_i N$ qui  s'identifie naturellement avec $\partial _1 P \times {\bf S}^1 $, et la composante ext\'erieure $\partial_{ext} N$ qui est le quotient de $\partial _ 3 P \times {\bf S}^1$ par le diff\'eomorphisme $\sigma \times i$. On recolle ces deux composantes par le diff\'eomorphisme 
\[  (x, z) \in \partial _3 P \times {\bf S}^1/ \sigma \times i \mapsto ( \frac{x z}{4} + \frac{1}{2} , z^2 )\in \partial _1 P \times {\bf S}^1  .\]
On obtient une vari\'et\'e compacte ferm\'ee $M$. La fibration horizontale par pantalons sur $N$ induit un feuilletage lisse $\mathcal F$ par surfaces~; c'est le feuilletage de Hirsch
 associ\'e \`a $T$. 

Pour construire une m\'etrique sur le fibr\'e tangent du feuilletage de Hirsch, il suffit de construire une famille de m\'etriques $\{ ds^2_ z \}_{z\in {\bf S}^1}$ sur un voisinage ouvert $U$ de $P$ dans ${\bf C}$, telles que 
\begin{itemize}
\item Pour tout $z\in {\bf S}^1$, on a $\sigma^{\star} ds^2 _ z = ds^2 _{i(z)}$.
\item Au voisinage de $\partial _ 3 P$, on a $(\frac{xz}{4} + \frac{1}{2})^{\star} ds^2_{z^2} = ds^2_z$ et $(\frac{xz}{4} - \frac{1}{2})^{\star} ds^2_{z^2} = ds^2_{i(z)}$.  
\end{itemize}  
Une fa\c{c}on simple de construire de telles familles est de consid\'erer des m\'etriques sur $P$ que nous appellerons~\textit{admissibles}. Une m\'etrique admissible est une m\'etrique $ds^2$ sur un voisinage de $P$ dans ${\bf C}$ qui, au voisinage de $\partial _ 3 P$, admet l'expression 
\[ |ds| = \frac{|dz|}{|z| (2\pi + \log \frac{1}{|z|} )},\] 
et v\'erifie 
  de plus $(\frac{x}{4} + \frac{1}{2})^{\star} ds^2 = ds^2$ et $(\frac{x}{4} - \frac{1}{2})^{\star} ds^2 = ds^2$. Alors, pour construire une m\'etrique sur le fibr\'e tangent de $\mathcal F$, il suffit de construire une famille $\{ds^2_z\}_{z\in {\bf S}^1}$ de m\'etriques admissibles qui v\'erifient de surcro\^{\i}t la condition $\sigma ^{\star} ds^2 _ z = ds^2 _ {i(z)}$. En effet, une m\'etrique admissible est invariante par rotation au voisinage de $\partial_ 3 P$, ce qui montre que pour toute paire de m\'etriques admissibles $ds_j ^2$, $j=0,1$, et tout $z$ du cercle, on a $(\frac{xz}{4} \pm \frac{1}{2})^{\star} ds^2_{0} = ds^2_1$.

\section{$g$-mesures et mesures harmoniques}  

On reprend les notations du paragraphe pr\'ec\'edent. Une \textit{$g$-fonction}\footnote{La terminologie est malheureuse mais c'est celle qui est classiquement utilis\'ee.} est une fonction continue $g: {\bf S}^1\rightarrow (1,+\infty)$ telle que pour tout point $z\in {\bf S}^1$,  
\[\frac{1}{g(z)}+ \frac{1}{g(i(z))} =1 .\] 
Une \textit{$g$-mesure} est une mesure de probabilit\'e $\mu$ sur le cercle telle que la d\'eriv\'ee de Radon-Nikodym de $T$ relativement \`a $\mu$ est la fonction $g$. Rappelons que cela signifie que, si $B$ est un Bor\'elien du cercle sur lequel $T$ est injective, alors $\mu ( T B) = \int _B g d\mu$. L'existence d'une $g$-mesure d\'ecoule du th\'eor\`eme du point fixe de Kakutani, voir~\cite{Keane}.

Lucy Garnett d\'emontre dans~\cite{Garnett} qu'il existe toujours une mesure harmonique sur un feuilletage \'equipp\'e d'une m\'etrique sur son fibr\'e tangent, qui est lisse sur les feuilles, et continue transversalement. Dans ce qui suit, nous construisons explicitement des mesures harmoniques dans le cas particulier du feuilletage de Hirsch. Plus pr\'ecis\'ement, \'etant donn\'ee une $g$-fonction associ\'ee \`a $T$, nous produisons une m\'etrique riemannienne sur $T\mathcal F$, qui est lisse le long des feuilles et admet la m\^eme r\'egularit\'e transverse que $g$, en sorte que toute $g$-mesure donne lieu \`a une mesure harmonique sur $\mathcal F$.  

\begin{proposition} Pour tout $\varepsilon >0$, il existe un voisinage $U$ de $P$ dans ${\bf C}$, tel que pour tout couple $(L_1,L_2)$ de r\'eels sup\'erieurs \`a $\varepsilon $ et v\'erifiant 
\[ e^{-L_1} + e^{-L_2} = 1,\]
il existe une m\'etrique riemannienne admissible $ds^2_ {L_1,L_2}$ sur $U$, et une fonction $\Delta_{ds^2_{L_1,L_2}}$-harmonique $\varphi_{L_1,L_2} : U \rightarrow {\bf R} ^{>0}$ qui v\'erifie les conditions suivantes~:
\begin{itemize} 
\item Pour tout $x$ dans un voisinage de $\partial_ 3 P$, on a $\varphi_{L_1,L_2}(x) = 1 + \frac{1}{2\pi}\log \frac{1}{|x|}$. 
\item Pour tout $x$ dans un voisinage de $\partial_3 P$, on a 
\[ \varphi_{L_1, L_2} ( \frac{x}{4} + 1/2 ) = e^{-L_1 }\varphi_{L_1,L_2} (x), \ \ \ \text{et} \ \ \ \varphi_{L_1, L_2} ( \frac{x}{4} - 1/2 ) = e^{-L_2} \varphi_{L_1,L_2} (x) . \]
\end{itemize}
De plus, on peut supposer que les m\'etriques $ds^2_{L_1,L_2}$ et les fonctions $\varphi_{L_1,L_2}$ d\'ependent de fa\c{c}on analytique de $L_1$ et $L_2$, et que pour tout $(L_1,L_2)$, on a $\sigma^{\star} ds^2_{L_1,L_2} = ds^2_{L_2,L_1}$.\end{proposition}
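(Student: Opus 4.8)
The plan is to use the two–dimensional fact that on a surface a function is $\Delta_g$-harmonic if and only if it is harmonic for the conformal class of $g$; since admissibility already pins down the metric near the three boundary circles (where it is conformal to the standard structure of $\mathbb{C}$), the whole problem reduces to choosing a conformal structure on $P$, standard near $\partial P$, together with a positive function harmonic for it and having the prescribed boundary forms. First I would record the boundary model. Writing $z=re^{i\theta}$ near $\partial_3P$ and setting $v=2\pi+\log\frac1r$, the admissible metric takes the hyperbolic form $\frac{dv^2+d\theta^2}{v^2}$, the prescribed value $1+\frac1{2\pi}\log\frac1{|z|}$ is exactly $\frac{v}{2\pi}$, and $\frac{v}{2\pi}$ is harmonic there; transporting by $x\mapsto\frac x4\pm\frac12$ gives the same picture near $\partial_1P$ and $\partial_2P$, where $\varphi_{L_1,L_2}$ must equal $e^{-L_1}\frac{v_1}{2\pi}$ and $e^{-L_2}\frac{v_2}{2\pi}$ respectively. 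A flux computation — conformally invariant, hence legitimate before the metric is fixed — gives outward fluxes $-1$, $e^{-L_1}$, $e^{-L_2}$ across $\partial_3P$, $\partial_1P$, $\partial_2P$, so the divergence theorem makes $e^{-L_1}+e^{-L_2}=1$ the exact necessary condition; the proposition asserts its sufficiency.

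Next I would show the standard structure cannot work and replace it by an explicit flat model. Computing the holomorphic derivative $2\partial_z\varphi$ shows that a harmonic function for the standard structure meeting all three prescriptions would have to coincide near the three circles with three distinct rational functions, which is impossible; so the conformal structure must be deformed in the core. Because the three boundary fluxes sum to zero exactly when $e^{-L_1}+e^{-L_2}=1$, one may form a flat ``trinion'' $Q_{L_1,L_2}$ with three flat cylindrical ends of circumferences $1$, $e^{-L_1}$ and $e^{-L_2}$ — the wide end of circumference $1=e^{-L_1}+e^{-L_2}$ splitting into the two narrow ones across a single cone point of angle $4\pi$, which is forced anyway since $\chi(P)=-1$ obliges the height function to have one saddle. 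On $Q_{L_1,L_2}$ the height function $h$ is harmonic, positive, and equal to $\frac{v}{2\pi}$, $e^{-L_1}\frac{v_1}{2\pi}$, $e^{-L_2}\frac{v_2}{2\pi}$ in the three ends.

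I would then transport this structure back by a diffeomorphism $F\colon P\to Q_{L_1,L_2}$ chosen to be the standard cusp identification on fixed collars of $\partial_1P,\partial_2P,\partial_3P$, setting the conformal structure equal to $F^{\star}$ of the flat one, $\varphi_{L_1,L_2}=h\circ F$, and taking for $ds^2_{L_1,L_2}$ any \emph{smooth} representative of this conformal class that agrees with the admissible cusp metric on the collars. The cone singularity lives only in the auxiliary flat metric $|df|^2$ and disappears in a smooth representative, because cone angle $4\pi$ is a smooth point of the conformal structure (locally $f$ is a square); thus the saddle of $\varphi_{L_1,L_2}$ is an ordinary interior point and the metric is genuinely smooth. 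Since the model cylinders, their circumferences $e^{-L_i}$, and an interpolating $F$ can all be written by explicit formulas, the dependence on $(L_1,L_2)$ is analytic along $e^{-L_1}+e^{-L_2}=1$, and carrying out the gluing symmetrically under $\sigma$ (which swaps the two legs, hence $L_1\leftrightarrow L_2$) yields $\sigma^{\star}ds^2_{L_1,L_2}=ds^2_{L_2,L_1}$; a single neighborhood $U$ and fixed collars serve uniformly for all $L_1,L_2\ge\eps$.

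The main obstacle is the construction of $F$, equivalently of the conjugate function $\psi$, i.e. of the closed form $\star d\varphi$: it must be an honest diffeomorphism onto $Q_{L_1,L_2}$ equal \emph{exactly} to the standard identification on the collars — so that $\varphi_{L_1,L_2}$ equals the prescribed expressions and not merely their leading terms — necessarily sending the unique interior saddle to the cone point, and it must do so analytically in $(L_1,L_2)$ and $\sigma$-equivariantly. Once $F$ is in hand, harmonicity, positivity, admissibility and the symmetry are all immediate.
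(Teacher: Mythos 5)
Your architecture is essentially the paper's. The paper glues two hyperbolic cylinders $S^1\times[0,L_i]$, carrying the metric $e^{2(v-L_i)}du^2+dv^2$, along short geodesic slits, obtaining a curvature $-1$ pair of pants $P_{L_1,L_2}$ with a single cone point of angle $4\pi$, horocyclic boundaries of length $1$, and the explicit harmonic function $\varphi=e^{-v}$; it then pulls everything back to $P$ by a diffeomorphism $\Phi$ adapted to the collars, and erases the cone singularity by multiplying the metric by a conformal factor $\rho$ --- legitimate precisely because harmonicity in dimension $2$ only sees the conformal class, which is your starting observation as well. Your flat trinion is conformally the identical object: the hyperbolic cylinder $\{1\le y\le e^{L_i}\}/(u\sim u+1)$ with the function $e^{-v}=e^{-L_i}y$ is nothing but a flat cylinder carrying a linear height function, so ``hyperbolic cylinders plus $e^{-v}$'' and ``translation-surface cylinders plus height'' describe the same Riemann surface with the same harmonic function, and your final step (a smooth conformal representative agreeing with the admissible form on the collars) is exactly the paper's factor $\rho$. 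Your two preliminary observations --- the flux argument showing $e^{-L_1}+e^{-L_2}=1$ is necessary, and the analytic-continuation argument that the standard conformal structure cannot work --- are correct and do not appear in the paper. Leaving the collar-exact diffeomorphism $F$ as the one remaining construction is the same level of detail as the paper, which merely asserts the existence of $\Phi$ from the matching of horocyclic collars.

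There is, however, one real problem, which you inherited from the statement but which your own model makes visible: the two displayed conditions, as written, are incompatible with the maximum principle, and the trinion you describe does not exist. The first condition gives $\varphi=1+\frac{1}{2\pi}\log\frac{1}{|x|}>1$ just inside $\partial_3 P$, while on the three boundary circles the conditions force $\varphi=1$, $e^{-L_1}<1$, $e^{-L_2}<1$; a function harmonic on a neighborhood of the compact set $P$ would then have an interior maximum, hence be constant. Your flux check (the fluxes sum to zero) is blind to this, but your trinion is not: the height is monotone along each flat cylinder, so if $h$ increases from the wide boundary (value $1$) toward the cone point, it keeps increasing into the two narrow cylinders, and its values on $\partial_1 P$, $\partial_2 P$ would exceed $1$ --- they cannot equal $e^{-L_i}<1$. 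What the paper's construction actually produces (and what yours produces once the wide end is put on the side where the height decreases into the surface) is the consistent variant in which $\varphi = 1-\frac{1}{2\pi}\log\frac{1}{|x|}$ near $\partial_3 P$, equivalently the sign inside the displayed formula for the admissible metric is flipped; the second condition then holds exactly as stated. So your route is the right one, and is essentially the paper's, but a correct write-up must fix this orientation; having specified your model so as to match the stated (impossible) boundary forms, rather than detecting the inconsistency, is a genuine gap and not a cosmetic one.
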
  

\begin{proof} 
Sur les cylindres $C_i={\bf S}^1 \times [0,L_i]$, $i=1,2$, consid\'erons la m\'etrique de courbure $-1$ d\'efinie par~: 
\[ e^{2(v-L_i )} du^2 + dv^2.\] 
En effet, c'est la m\'etrique qu'on obtient en partant de $\frac{du^2 + dy^2}{y^2}$ et en effectuant le changement de variables $y = e^{L_i-v}$. Les bords $\partial_- C_i = {\bf S}^1 \times 0$ et $\partial _+ C_i = {\bf S}^1 \times L_i$ sont alors des \textit{horocycles} respectivement n\'egatif de longueur $e^{-L_i}$ et positif de longueur $1$\footnote{Nous entendons par horocycle positif ou n\'egatif une courbe lisse de courbure sign\'ee $1$ ou $-1$.}.

On coupe $C_1$ et $C_2$ le long des g\'eod\'esiques $1\times [0,\varepsilon]$ et  $1\times [0,\varepsilon]$, et on colle le segment $1_+ \times [0,\varepsilon]$ de $C_1$ (resp.  $1_- \times [0,\varepsilon]$ de $C_2$) au segment $1_- \times [0,\varepsilon]$ de $C_2$ (resp.  $1_+ \times [0,\varepsilon]$ de $C_1$) de fa\c{c}on isom\'etrique et en renversant l'orientation. On construit de cette fa\c{c}on un pantalon $P_{L_1,L_2}$ avec une m\'etrique $ds^2$ de courbure $-1$ et une singularit\'e conique d'angle $4\pi$. Ce pantalon $P_{L_1,L_2}$ a trois composantes de bord~: les composantes $\partial _{1} P_{L_1,L_2} = \partial _+ C_1$, $\partial _2 P_{L_1,L_2} = \partial _+ C_{2}$, qui sont des horocycles positifs de longueur $1$, et la composante $\partial _3 P_{L_1,L_2}$ qui est un horocycle n\'egatif de longueur la somme des longueurs des bords $\partial _- C_{1}$ et $\partial _- C_{2}$, c'est \`a dire $e^{-L_1} + e^{-L_2}= 1$. 

La m\'etrique avec singularit\'e conique munit $P_{L_1,L_ 2}$ d'une structure de surface de Riemann lisse - l'atlas des cartes pr\'eservant l'orientation dans lesquelles la m\'etrique est conforme \`a la m\'etrique plate $|dz|$ sur ${\bf C}$. La fonction $\varphi_{L_1,L_2}: P_{L_1,L_2} \rightarrow {\bf R}$ d\'efinie sur chaque $C_i$ par $e^{-v}$ est alors une fonction \textit{harmonique} sur $P_{L_1,L_2}$, qui vaut $e^{-L_i}$ sur $\partial _ i P_{L_1,L_2}$ pour $i=1,2$, et $1$ sur $\partial _ 3 P_{L_1,L_2}$.

Les bords de $P_{L_1,L_2}$ \'etant horocycliques de longueur $1$, il existe un diff\'eomorphisme $\Phi : P \rightarrow P_{L_1,L_2}$ tel que $\Phi ^{\star} ds^2$ est une m\'etrique admissible, \`a ceci pr\`es qu'elle admet une singularit\'e conique. On peut choisir $\Phi$ en sorte que cette derni\`ere se situe \`a l'origine, et que l'on ait en son voisinage $\Phi^{\star} ds^2 = |x| ^2 |dx|^2$. On consid\`ere alors une m\'etrique de la forme $ds^2 _{L_1,L_2} = \rho \Phi^{\star} ds^2$, o\`u $\rho : P \setminus \{0\}\rightarrow {\bf R}^{>0}$ est une fonction lisse, qui vaut identiquement $1$ \`a l'ext\'erieur d'un petit voisinage de l'origine, et qui, dans un voisinage encore plus petit, est de la forme $\rho (x) = \frac{1}{|x|^2}$.

La fonction $\varphi_{L_1,L_2}\circ \Phi$ est alors $\Delta_{ds^2_{L_1,L_2}}$-harmonique, et v\'erifie les conditions du lemme. 
\end{proof} 

Nous choisissons $\varepsilon $ de sorte que $0< \varepsilon < \inf _{z\in {\bf S}^1} \log g (z)$. Pour chaque point $z$ du cercle, on pose 
\[  L_1(z) = \log g(z),\ \ \ \ \text{et} \ \ \ \ L_2(z) = \log g ( i(z) ) .\]
La famille de m\'etriques admissibles $\{  ds_z ^2 \} _{z \in {\bf S}^1} $ d\'efinies par $ds^2 _ z = ds^ 2 _{L_1(z), L_2(z)}$ d\'efinit alors une m\'etrique sur le feuilletage de Hirsch. D'autre part, si $\mu$ est une $g$-mesure sur le cercle, alors la mesure 
\[  m = \varphi_{L_1(z), L_2(z)} \text{vol} (ds^2 _z ) \otimes \mu \]
d\'efinit une mesure harmonique sur le feuilletage de Hirsch. Pour conclure, il nous suffit de prendre une $g$-fonction continue pour laquelle il existe plusieurs $g$-mesures diff\'erentes, dont l'existence nous est assur\'ee par un th\'eor\`eme de Anthony N. Quas~\cite{Quas}.

\vspace{0.1cm}

\vspace{0.3cm}

\end{small}

\end{document}